\theoremstyle{plain}
\newtheorem{Thm}{Theorem}
\newtheorem{Lem}{Lemma}
\newtheorem{Conj}{Conjecture}
\theoremstyle{definition}
\newtheorem*{Ack}{Acknowledgment}
\theoremstyle{remark}
\newtheorem*{Rem}{Remark}
\def\r{{\mathbf r}}
\def\cH{{\mathcal{H}}}
\def\cR{{\mathcal{R}}}
\def\N{\mathbb N}
\def\S{\mathcal{S}}
\def\T{\mathcal{T}}
\def\cA{\mathcal{A}}
\def\sP{\mathscr{P}}
\def\1{{\bf 1}}
\def\pmod #1{\ ({\rm mod}\ #1)}
\def\floor #1{\lfloor{#1}\rfloor}
\begin{document}

\title{On the number of distinct prime factors of $nj+a^hk$}
\author{Hao Pan}
\email{haopan79@yahoo.com.cn}
\address{Department of Mathematics, Nanjing University,
Nanjing 210093, People's Republic of China} \subjclass[2000]{Primary
11P32; Secondary 11A07, 11B05, 11B25, 11N36}\keywords{prime factor,
power of 2}
\thanks{The author is supported by
the National Natural Science Foundation of China (Grant No.
10771135).} \maketitle
\begin{abstract}
Let $\omega(n)$ denote the number of distinct prime factors of $n$.
Then for any given $K\geq 2$, small $\epsilon>0$ and sufficiently
large (only depending on $K$ and $\epsilon$) $x$, there exist at
least $x^{1-\epsilon}$ integers $n\in[x,(1+K^{-1})x]$ such that
$\omega(nj\pm a^hk)\geq(\log\log\log x)^{\frac{1}{3}-\epsilon}$ for
all $2\leq a\leq K$, $1\leq j,k\leq K$ and $0\leq h\leq K\log x$.
\end{abstract}

\section{Introduction}
\setcounter{equation}{0} \setcounter{Thm}{0} \setcounter{Lem}{0}
\setcounter{Cor}{0}

In 1849, Polignac conjectured that every odd integer greater than 3
is the sum of a prime and a power of 2. However, 127 is an evident
counterexample for Polignac's conjecture. In 1950, van der Corput
\cite{vanderCorput50} proved that there are a positive proportion of
positive odd integers not of the form $p+2^h$ with $p$ is prime and
$h\in\N$. In fact, using covering congruences, Erd\H os
\cite{Erdos50} constructed a residue class of odd integers, which
contains no integers of the form $p+2^h$.

In 1975, using Erd\H os' idea, Cohen and Selfridge
\cite{CohenSelfridge75} found a residue class of odd integers, whose
every elements can not be representable as $\pm p^\alpha\pm 2^h$.
And another example with a smaller modulus may be found in
\cite{Sun00}. Recently, using Selberg's sieve method, Tao \cite{Tao}
proved that for any give integer $K\geq 2$, there exist at least
$c_Kx/\log x$ primes $p$ in the interval $[x, (1+K^{-1})x]$
satisfying $|pj\pm a^hk|$ is composite for every $2\leq a\leq K$,
$1\leq j,k\leq K$ and $1\leq h\leq K\log x$, where $c_K$ is a
constant only depending on $K$.

Let $\omega(n)$ denote the number of distinct prime factors of $n$.
In \cite{Erdos77}, Erd\H os mentioned that if there exist
incongruent covering systems with arbitrarily large least modulus,
then for any integer $K\geq 2$, the set
$$
\{n\geq 1:\,n\text{ is odd and is not of the form }q+2^h\text{ with
}\omega(q)\leq K\text{ and }h\in\N\}
$$
contains an infinite arithmetic progression. By modifying Tao's
discussions, in this paper, we shall prove that
\begin{Thm}\label{t1} Suppose that $K\geq 2$ is an integer and $\epsilon>0$ is small number. Then for sufficiently large (only depending on $K$ and
$\epsilon$) $x$, there exist at least $x^{1-\epsilon}$ integers
$n\in[x,(1+K^{-1})x]$ such that $\omega(nj\pm a^hk)\geq(\log\log\log
x)^{\frac{1}{3}-\epsilon}$ for all $2\leq a\leq K$,\ $1\leq j,k\leq
K$ and $0\leq h\leq K\log x$.
\end{Thm}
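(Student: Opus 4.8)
The plan is to combine the covering-congruence construction of Erdős (in the robust form that Tao uses) with a second sieve layer that forces many distinct prime factors, all arranged to hold simultaneously for the $O(K^2\log x)$ linear forms $nj\pm a^hk$. The first step is to fix a large parameter $L$ (eventually $L\asymp(\log\log\log x)^{1/3-\epsilon}$) and, for each choice of parameters $(a,j,k,h)$ in the allowed ranges, to prescribe a residue class for $n$ that guarantees $L$ prescribed small primes divide $nj\pm a^hk$. Concretely, for each fixed $a$ one picks a covering system $\{(r_i,m_i)\}$ of the exponents $h$ by congruences, where for the class $h\equiv r_i\pmod{m_i}$ the quantity $a^h$ is congruent to a fixed value modulo a prime $p_i$ with $\ord_{p_i}(a)\mid m_i$; then the condition $p_i\mid nj\pm a^hk$ becomes a single congruence condition on $n$ modulo $p_i$, uniformly in $h$ inside that class. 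Iterating this — peeling off one prime at a time while shrinking to sub-classes of the remaining exponents, as in the Cohen–Selfridge refinement — produces, for each $(a,j,k,\pm)$, a modulus $Q_{a,j,k,\pm}$ and a residue $b_{a,j,k,\pm}$ such that $n\equiv b_{a,j,k,\pm}\pmod{Q_{a,j,k,\pm}}$ forces $\omega(nj\pm a^hk)\ge L$ for \emph{all} admissible $h$. One must check the primes used across different $(a,j,k,\pm)$ can be taken distinct (or at least that the combined CRT system is consistent), and that the total modulus $Q=\prod Q_{a,j,k,\pm}$ satisfies $Q\le x^{\epsilon/2}$; this is where the triple-logarithm comes in — the $i$-th prime needed for a given order has size roughly exponential in $i$, and we need $\log Q\le\epsilon\log x$, so the number $L$ of primes per form can be pushed only to about $(\log\log\log x)^{1/3-\epsilon}$ after accounting for the $K\log x$ values of $h$ (which inflate the moduli $m_i$, hence the primes $p_i$).

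The second step is the counting. After the CRT reduction we are looking at a single arithmetic progression $n\equiv b\pmod{Q}$ with $Q\le x^{\epsilon/2}$, and inside the interval $[x,(1+K^{-1})x]$ this progression has $\asymp x/Q\ge x^{1-\epsilon/2}$ members. Every such $n$ already satisfies $\omega(nj\pm a^hk)\ge L$. However a small nuisance remains: the forms $nj\pm a^hk$ could in principle be zero or negative, or could be prime powers supported entirely on the prescribed small primes with the "distinctness" accidentally collapsing; but since we have arranged $L$ \emph{distinct} primes dividing each form and (for $h\ge 1$, $a\ge2$) the form has absolute value $\ge x-K\cdot a^{K\log x}$... — wait, that is negative, so one instead notes the prescribed small primes already witness $\omega\ge L$ regardless of sign, and for the finitely many $(j,k,h)$ with $nj\pm a^hk$ possibly vanishing one discards an exceptional set of size $O(x^{1-\epsilon})$ by excluding the relevant residue classes. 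So essentially no further sieving is needed beyond the covering construction: the lower bound $x^{1-\epsilon}$ comes directly from the density of the final progression, and one should set the bookkeeping so that $\epsilon$ in the modulus bound and $\epsilon$ in the exponent of $x$ and $\epsilon$ in the exponent $\frac13-\epsilon$ are all comparable.

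The main obstacle — and the place where Tao's method genuinely has to be modified rather than quoted — is making the covering construction produce $L$ \emph{distinct} primes per form while keeping the total modulus below $x^{\epsilon}$, \emph{uniformly over the exponent range $0\le h\le K\log x$}. The range of $h$ is what costs us: to cover exponents up to $K\log x$ one needs covering moduli $m_i$ that are themselves as large as (a constant times) $\log x$, which forces the associated primes $p_i$ (chosen so that $\ord_{p_i}(a)\mid m_i$) to be correspondingly large, roughly of size $\exp(c\,m_i)$ in the worst case, or at least to grow geometrically as we iterate. Balancing "$L$ layers of primes" against "$\log Q\le\epsilon\log x$" against "moduli up to $\log x$" is precisely the optimization that yields the exponent $\tfrac13$: heuristically, if the primes at depth $i$ have size $\exp(\exp(\exp(ci)))$-ish after the triple iteration forced by the three nested constraints, then $L\asymp(\log\log\log x)^{1/3}$ is the break-even point. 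Verifying that one can in fact realize a covering system with these quantitative properties — and that the primes across the $\le 2K^3\log x$ different forms $(a,j,k,\pm,h)$ can be chosen consistently without a blow-up in $Q$ — is the technical heart of the argument; everything after the CRT collapse is soft. I would structure the write-up as: (i) a quantitative covering lemma producing the $(Q_{a,j,k,\pm},b_{a,j,k,\pm})$; (ii) consistency and the bound $Q\le x^{\epsilon/2}$; (iii) the trivial counting on the final progression, with removal of the $O(x^{1-\epsilon})$ exceptional $n$ where some form vanishes; (iv) optimization of $L$ against $\epsilon$ and $x$.
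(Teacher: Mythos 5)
There is a genuine gap, and it sits exactly at step (i) of your outline. You propose to build, for each $(a,j,k,\pm)$, a \emph{full} covering of all exponents $0\le h\le K\log x$ with multiplicity $L$: every $h$ lies in at least $L$ classes $h\equiv r_i\pmod{m_i}$, each class tied to a distinct prime $p_i$ with $\ord_{p_i}(a)\mid m_i$, with all primes distinct across forms and total modulus $\le x^{\epsilon/2}$. No such construction is known, and there is a structural obstruction: each prime $q$ with $\ord_q(a)=m$ forces a divisibility of $nj\pm a^hk$ only on a \emph{single} residue class of $h$ modulo $m$, and the number of primes of order $m$ is at most about $m\log a/\log m$, which is smaller than $m$ once $m$ is large; so for large moduli you cannot even cover all residues once, let alone with multiplicity $L\to\infty$, without a delicate covering-system construction. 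This is precisely the hypothesis Erd\H os needed in the remark quoted in the introduction (existence of incongruent covering systems with arbitrarily large least modulus) even for the \emph{qualitative} statement with fixed $K$, and it was an open problem (its covering-system hypothesis is now known to fail). A symptom of the problem is your conclusion that ``everything after the CRT collapse is soft'': if a multiplicity-$L$ covering with modulus $\le x^{\epsilon/2}$ existed, the same trivial counting would give bounds far stronger than $(\log\log\log x)^{1/3}$, which is a red flag rather than a feature. Your heuristic for the exponent $1/3$ (``primes at depth $i$ of size triply exponential in $i$'') does not correspond to any actual constraint in a correct argument.

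The paper's proof avoids the full covering entirely, and this is the missing idea. It chooses, for each $\r=(a,j,k,l)$, a family of primes $p_{\r,t}$ and uses the primitive prime factors $q_{\r,I}$ of $a^{m_{\r,I}}-1$ for subsets $I$ of bounded size, imposing on $n$ (via CRT, modulus $W\le x^{\epsilon/2}$) the congruences $bj+a^{|I|-1}k\equiv 0\pmod{q_{\r,I}}$. This forces $\omega(nj+a^hk)\ge L$ only for those $h$ satisfying $h\equiv|I|-1\pmod{m_{\r,I}}$ for some admissible $I$, for each of the $L$ values of $l$; the uncovered exponents form an exceptional set $\cH$ which is shown to be small by a Selberg-sieve argument (Lemma \ref{ur}). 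The second, equally essential ingredient is Lemma \ref{omegak}, a Hardy--Ramanujan-type upper bound (proved by induction and Selberg's sieve, Lemma \ref{selberg}) for the number of $n$ in an arithmetic progression with $\omega(Wn+b)<L$: for each exceptional $h\in\cH$ one does \emph{not} get divisibility for free, but the number of bad $n$ is $\ll LC^{L}x(\log\log x)^{L-2}W/(\phi(W)\log x)$, and summing this over $h\in\cH$ and over $(a,j,k)$ must be $o(x/W)$. The exponent $\tfrac13$ comes from balancing these two bounds: the gain $(\log\log x)^{2(1-\epsilon)L}$ in $|\cH|$ must beat combinatorial losses of size roughly $(C\log\log\log x)^{O(M^2L^4)}$ together with the $(\log\log x)^{L}$ loss from Lemma \ref{omegak}, which forces $L^{4}\ll L\,(\log\log\log x)^{1-o(1)}$, i.e.\ $L\asymp(\log\log\log x)^{1/3-\epsilon}$. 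Without an analogue of this two-layer argument (partial covering of exponents plus a sieve bound on integers with few prime factors), your plan cannot be completed as written.
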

The proof of Theorem \ref{t1} will be given in the next section. And
unless indicated otherwise, the constants implied by $\ll$, $\gg$
and $O(\cdot)$ only depend on $K$ and $\epsilon$.

\section{Proof of Theorem \ref{t1}}
\setcounter{equation}{0} \setcounter{Thm}{0} \setcounter{Lem}{0}
\setcounter{Cor}{0}

The following lemma is the well-known Mertens theorem.
\begin{Lem}[{\cite[Theorems 6.6, 6.7, 6.8]{Nathanson96}}]
\label{mertens}
$$
\sum_{\substack{p\leq x\\ p\text{ prime}}}\frac{\log p}{p}=\log
x+c_1+O\bigg(\frac{1}{\log x}\bigg),
$$
$$
\sum_{\substack{p\leq x\\ p\text{ prime}}}\frac{1}{p}=\log\log
x+c_2+O\bigg(\frac{1}{\log x}\bigg),
$$
and
$$
\prod_{\substack{p\leq x\\ p\text{
prime}}}\bigg(1-\frac{1}{p}\bigg)^{-1}=e^{\gamma}\log x+O(1),
$$
where $\gamma$ is the Euler constant and $c_1,c_2$ are absolute
constants.

\end{Lem}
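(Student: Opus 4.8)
These three estimates are the classical theorems of Mertens, and my plan is to reproduce their standard derivations, treating the leading terms as the routine part and the precise constants $c_1,c_2$, the factor $e^{\gamma}$, and the $O(1/\log x)$ errors as the substance.

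For the first estimate I would start from Legendre's identity $\log n=\sum_{d\mid n}\Lambda(d)$, which gives $\sum_{n\le x}\log n=\sum_{d\le x}\Lambda(d)\lfloor x/d\rfloor$. Stirling's formula supplies $\sum_{n\le x}\log n=x\log x-x+O(\log x)$, while writing $\lfloor x/d\rfloor=x/d+O(1)$ and using Chebyshev's bound $\psi(x)=\sum_{d\le x}\Lambda(d)\ll x$ collapses the right-hand side to $x\sum_{d\le x}\Lambda(d)/d+O(x)$. Dividing by $x$ yields $\sum_{d\le x}\Lambda(d)/d=\log x+O(1)$, and removing the prime powers $p^k$ with $k\ge2$ (an absolutely convergent correction, since $\sum_p(\log p)/(p(p-1))<\infty$) leaves $\sum_{p\le x}(\log p)/p=\log x+O(1)$. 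This already gives the leading term; pinning down the constant $c_1$ and sharpening the error to $O(1/\log x)$ is genuinely deeper, as the convergence of $\sum_{p\le x}(\log p)/p-\log x$ reflects the distribution of primes beyond Chebyshev's bounds (prime number theorem strength) rather than the mere bookkeeping above, and this is one of the two main obstacles.

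For the second estimate I would run Abel summation on the first. Setting $A(t)=\sum_{p\le t}(\log p)/p=\log t+E(t)$ with $E(t)=O(1)$ and summing $1/p=((\log p)/p)\cdot(1/\log p)$ by parts, I get $\sum_{p\le x}1/p=A(x)/\log x+\int_2^x A(t)\,dt/(t\log^2 t)$. The main term contributes $\int_2^x dt/(t\log t)=\log\log x+O(1)$, and the decisive point is that $\int^\infty dt/(t\log^2 t)$ converges; hence the bounded error $E$ produces a convergent constant $\int_2^\infty E(t)\,dt/(t\log^2 t)$ plus an $O(1/\log x)$ tail. Collecting terms gives $\sum_{p\le x}1/p=\log\log x+c_2+O(1/\log x)$. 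It is worth noting that this needs only the boundedness of $E$, not the convergence of the first estimate, which is why the second estimate is fully elementary even though the sharp first estimate is not.

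For the third estimate I would take logarithms: $\log\prod_{p\le x}(1-1/p)^{-1}=-\sum_{p\le x}\log(1-1/p)=\sum_{p\le x}1/p+\sum_{p\le x}\sum_{k\ge2}1/(kp^k)$. The double sum converges as $x\to\infty$ with tail $O(1/x)$, so by the second estimate the logarithm equals $\log\log x+c_3+O(1/\log x)$ for some constant $c_3$, and exponentiating gives $\prod_{p\le x}(1-1/p)^{-1}=e^{c_3}\log x\,(1+O(1/\log x))=e^{c_3}\log x+O(1)$. The remaining content is the identification $c_3=\gamma$; I would obtain it by comparing the Euler product with the harmonic sum $\sum_{n\le x}1/n=\log x+\gamma+O(1/x)$, or equivalently by analysing $\zeta(s)=\prod_p(1-p^{-s})^{-1}$ as $s\to1^{+}$, where $\zeta(s)=1/(s-1)+\gamma+O(s-1)$ fixes the constant. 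This determination of $e^{\gamma}$ is the second main obstacle; once it and the sharp first estimate are settled, the three statements follow as above.
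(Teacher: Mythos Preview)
The paper does not prove this lemma at all: it is stated with a citation to Nathanson's textbook (Theorems 6.6, 6.7, 6.8) and used as a black box, so there is no in-paper argument to compare against. Your sketch is the standard derivation and is essentially correct; in particular you are right that the second and third estimates, with the stated $O(1/\log x)$ errors and the constant $e^{\gamma}$, follow elementarily from the $O(1)$ form of the first via Abel summation and the $\zeta$-function (or harmonic-sum) identification, while pinning the first estimate down to a specific constant $c_1$ with error $O(1/\log x)$ genuinely requires prime-number-theorem input. Since the paper only invokes the lemma and never relies on the sharp form of the first estimate (only the second and third are used, in Lemma~\ref{GzWz} and Lemma~\ref{omegak}), your level of detail already exceeds what the paper needs.
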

Define
$$
\sP(z)=\prod_{\substack{p<z\\ p\text{ prime}}}p.
$$
\begin{Lem}
\label{GzWz} Suppose that $A, B>100$ and $z>e^{100A}$. Suppose that
$\omega$ is a function satisfying
$$
0\leq\omega(p)\leq\min\{A,(1-B^{-1})p\}
$$
for any prime $p$. Then
$$
\frac{1}{G(z)}\leq A^{2AB}W(z),
$$
where
$$
G(z)=\sum_{\substack{d\mid \sP(z)\\
d<z}}\prod_{p\mid d}\frac{\omega(p)}{p-\omega(p)}
$$
and
$$
W(z)=\prod_{\substack{p<z\\
p\text{ prime}}}\bigg(1-\frac{\omega(p)}{p}\bigg).
$$
\end{Lem}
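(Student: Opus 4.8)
The plan is to compare the Dirichlet-type sum $G(z)$ with the Euler product $W(z)^{-1}$, both being built from the local densities $\omega(p)/(p-\omega(p))$, and to show that truncating the divisor sum at $d<z$ only loses a controllable factor. First I would write out the full multiplicative sum
$$
\widetilde G(z):=\sum_{d\mid\sP(z)}\prod_{p\mid d}\frac{\omega(p)}{p-\omega(p)}=\prod_{\substack{p<z\\p\text{ prime}}}\Bigl(1+\frac{\omega(p)}{p-\omega(p)}\Bigr)=\prod_{\substack{p<z\\p\text{ prime}}}\Bigl(1-\frac{\omega(p)}{p}\Bigr)^{-1}=\frac{1}{W(z)},
$$
so that $G(z)=\widetilde G(z)-R(z)=W(z)^{-1}-R(z)$, where $R(z)$ is the tail sum over those $d\mid\sP(z)$ with $d\geq z$. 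Thus the inequality $1/G(z)\leq A^{2AB}W(z)$ is equivalent to $G(z)\geq A^{-2AB}W(z)^{-1}$, i.e.\ to showing the tail satisfies $R(z)\leq(1-A^{-2AB})W(z)^{-1}$; it suffices to get $R(z)\leq\tfrac12 W(z)^{-1}$, say, with room to spare, since $A>100$ makes $A^{-2AB}$ tiny.

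The key step is a Rankin-style bound on the tail $R(z)$. For any $\sigma>0$ one has, using $d\geq z$ on the tail,
$$
R(z)=\sum_{\substack{d\mid\sP(z)\\ d\geq z}}\prod_{p\mid d}\frac{\omega(p)}{p-\omega(p)}\leq z^{-\sigma}\sum_{d\mid\sP(z)}d^{\sigma}\prod_{p\mid d}\frac{\omega(p)}{p-\omega(p)}=z^{-\sigma}\prod_{\substack{p<z\\p\text{ prime}}}\Bigl(1+\frac{p^{\sigma}\omega(p)}{p-\omega(p)}\Bigr).
$$
Since $\omega(p)\leq(1-B^{-1})p$ gives $p-\omega(p)\geq p/B$, each factor is at most $1+Bp^{\sigma-1}\omega(p)\cdot\frac{p}{p-\omega(p)}\cdot\frac{1}{p}$; more crudely, $\frac{\omega(p)}{p-\omega(p)}\leq B$ and $\omega(p)\leq A$ together give that the factor is at most $1+B p^{\sigma}\leq 1+BA^{\sigma}$ only for the finitely many $p\leq A$, while for $p>A$ we have $\omega(p)/(p-\omega(p))<1$ for our range. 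The cleanest route: choose $\sigma=1/\log z$ so that $p^{\sigma}\leq e$ for all $p<z$ and $z^{-\sigma}=e^{-1}$. Then each factor is at most $1+e\,\frac{\omega(p)}{p-\omega(p)}\leq\bigl(1-\frac{\omega(p)}{p}\bigr)^{-e}$ (using $1+et\leq(1+t)^{e}$ for $t=\frac{\omega(p)}{p-\omega(p)}\geq0$ when $e\geq1$), which would already give $R(z)\leq e^{-1}W(z)^{-e}$. That is not quite of the form $W(z)^{-1}$, so I would instead split off the small primes: write $R(z)\leq z^{-\sigma}\prod_{p\leq A}(1+B)\cdot\prod_{A<p<z}\bigl(1+e\,\frac{\omega(p)}{p-\omega(p)}\bigr)$ with a sharper $\sigma$, e.g.\ $\sigma$ chosen so that $z^{\sigma}=A^{AB}$, i.e.\ $\sigma=\frac{AB\log A}{\log z}$, which is $<\tfrac1{100}$ by the hypothesis $z>e^{100A}$ (here I also use $B$ is fixed in the relevant range—more carefully, the hypothesis should be read with $z$ large enough relative to $A,B$; the stated $z>e^{100A}$ handles the dependence on $A$, and one absorbs the $B$-dependence into the final exponent $2AB$ rather than $AB$).

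Carrying this out: with $p^{\sigma}\leq z^{\sigma}=A^{AB}$ for every $p<z$, each factor of the product is $\leq 1+A^{AB}\frac{\omega(p)}{p-\omega(p)}\leq\bigl(1-\frac{\omega(p)}{p}\bigr)^{-A^{AB}}$, hence
$$
R(z)\leq A^{-AB}\,W(z)^{-A^{AB}}.
$$
This overshoots, so the right balance is to take $\sigma$ so that $z^{-\sigma}$ beats the blow-up: since $W(z)^{-1}\leq e^{\gamma}\log z+O(1)\ll\log z$ by Lemma 2.1, and the number of primes below $z$ is $O(z/\log z)$, the product $\prod_{p<z}(1+p^{\sigma}\frac{\omega(p)}{p-\omega(p)})$ is at most $\exp\bigl(A^{AB}\sum_{p<z}\frac{\omega(p)}{p-\omega(p)}\bigr)\leq\exp\bigl(A^{AB}\cdot AB\log\log z+O(A^{AB})\bigr)$ using $\omega(p)\leq A$ and Lemma 2.1, which is $\leq z^{o(1)}$; choosing $\sigma$ a fixed small multiple of $1/\log z$ then makes $z^{-\sigma}\cdot z^{o(1)}$ decay. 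I expect the cleanest packaging is to prove directly $R(z)\leq\tfrac12 W(z)^{-1}$, whence $G(z)\geq\tfrac12 W(z)^{-1}\geq A^{-2AB}W(z)^{-1}$, i.e.\ $1/G(z)\leq 2W(z)\leq A^{2AB}W(z)$ since $A>100$. \textbf{The main obstacle} is bookkeeping the two competing effects—the factor $z^{-\sigma}$ from the tail truncation versus the growth $\prod_p(1+p^\sigma\omega(p)/(p-\omega(p)))$, the latter controlled via Lemma 2.1 and the bounds $\omega(p)\leq A$, $p-\omega(p)\geq p/B$—and choosing $\sigma$ (a small constant over $\log z$) so that the product of the two is genuinely $\ll W(z)^{-1}$, with all constants tracked to land inside the generous exponent $2AB$; the hypothesis $z>e^{100A}$ is exactly what makes $z^{-\sigma}$ strong enough to absorb the contribution of the primes $p\leq A$.
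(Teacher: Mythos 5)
There is a genuine gap, and it is not a bookkeeping issue: the intermediate claim your whole plan rests on, namely $R(z)\leq\tfrac12 W(z)^{-1}$ (equivalently $G(z)\geq\tfrac12W(z)^{-1}$), is false for large $A$, so no amount of optimizing $\sigma$ in Rankin's trick can rescue the "head $=$ total $-$ tail" strategy. To see this, take $\omega(p)=A$ for all primes $p>2A$ and $\omega(p)=0$ otherwise (admissible since $B>100$). Under the product measure in which each prime $p<z$ divides $d$ independently with probability $\omega(p)/p$, one has exactly $W(z)G(z)=\Pr[\,d<z\,]$, while the expected value of $\log d$ is $\sum_{p<z}\omega(p)\log p/p\approx A\log z$. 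Already the necessary condition that $d<z$ have at most one prime factor in $(z^{1/2},z)$ forces $\Pr[\,d<z\,]\leq\prod_{z^{1/2}<p<z}(1-\tfrac Ap)\bigl(1+\sum_{z^{1/2}<q<z}\tfrac{A/q}{1-A/q}\bigr)\approx 2^{-A}(1+A\log 2)$, far below $\tfrac12$. In other words, when the "dimension" is $A$, almost all of the mass of the full product $W(z)^{-1}$ sits on $d\geq z$; the factor $A^{2AB}$ in the lemma is a genuine loss from truncating at $d<z$, not slack into which a constant like $2$ fits. Your own intermediate outputs show the symptom: $R(z)\leq e^{-1}W(z)^{-e}$ and $R(z)\leq A^{-AB}W(z)^{-A^{AB}}$ are weaker than the trivial bound $R(z)\leq W(z)^{-1}$ once one recalls $W(z)^{-1}\asymp_A(\log z)^A$. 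More structurally, any Rankin bound $R(z)\leq z^{-\sigma}\prod_{p<z}\bigl(1+p^{\sigma}\tfrac{\omega(p)}{p-\omega(p)}\bigr)$ is also an upper bound for the untruncated sum, so to extract a positive head you would need it to fall below $(1-A^{-2AB})W(z)^{-1}$; but in the example above, writing $\sigma=c/\log z$, the gain $e^{-c}$ is swamped by the growth of the ratio $\prod_{p<z}\bigl(1+(p^{\sigma}-1)\tfrac{\omega(p)}{p}\bigr)\geq\exp(cA(1+o(1))/2)$, so for every $\sigma>0$ the Rankin bound exceeds $W(z)^{-1}$ itself whenever $A\geq 2$. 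Thus the approach cannot prove the lemma.

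The missing idea, which is how the paper proceeds, is to lower the \emph{sifting level} rather than fight the truncation: introduce $G(\xi,z_1)=\sum_{d\mid\sP(z_1),\,d<\xi}\prod_{p\mid d}\tfrac{\omega(p)}{p-\omega(p)}$ and apply the Halberstam--Richert inequality $1-W(z_1)G(\xi,z_1)\leq\exp\bigl(-\tfrac{\log\xi}{\log z_1}+eA\bigr)$ with $\xi=z$ and $z_1=z^{1/C}$, $C=eA+1$. Because the support level $z$ is then $C$ powers of the sifting level, the error is at most $e^{-1}$ and $W(z^{1/C})G(z,z^{1/C})\geq\tfrac12$; since every $d$ counted in $G(z,z^{1/C})$ is also counted in $G(z)$, this gives a lower bound for $G(z)$ in terms of $W(z^{1/C})^{-1}$. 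The price is paid in the second step: passing from $W(z^{1/C})$ to $W(z)$ via $-\log(1-\omega(p)/p)\leq B\,\omega(p)/p$ and Mertens over $z^{1/C}\leq p<z$ costs a factor $C^{\frac32AB}\leq A^{2AB}$, and this two-scale argument is precisely where the exponent $2AB$ comes from. Note also that the hypothesis $z>e^{100A}$ is what makes $z^{1/C}$ large enough for the Mertens estimates on $[z^{1/C},z)$; it is not there to let $z^{-\sigma}$ absorb the primes $p\leq A$, as your sketch suggests.
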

\begin{proof} Define
$$
G(\xi,z)=\sum_{\substack{d\mid \sP(z)\\ d<\xi}}\prod_{p\mid
d}\frac{\omega(p)}{p-\omega(p)}
$$
for $\xi\geq z$. Then by the discussions in the proof of \cite[Lemma
4.1]{HalberstamRichert74}, for any $\sigma\leq 1$, we have
\begin{align*}
1-W(z)G(\xi,z) \leq\exp\bigg(-(1-\sigma)\log\xi+\sum_{\substack{
\substack{p<z\\
p\text{
prime}}}}\bigg(\frac{1}{p^\sigma}-\frac1p\bigg)\omega(p)\bigg).
\end{align*}
Letting $\sigma=1-1/\log z$, we get
\begin{align*}
\sum_{\substack{ p<z\\
p\text{
prime}}}\bigg(\frac{1}{p^\sigma}-\frac1p\bigg)\omega(p)=\sum_{\substack{
p<z\\ p\text{ prime}}}\frac{\omega(p)}{p}(e^{\frac{\log p}{\log
z}}-1)\leq \frac{e-1}{\log z}\sum_{\substack{ p<z\\
p\text{ prime}}}\frac{\omega(p)\log p}{p}\leq eA.
\end{align*}
Hence
\begin{equation}
\label{wzgz} 1-W(z)G(\xi,z)\leq\exp\bigg(-\frac{\log\xi}{\log
z}+eA\bigg). \end{equation} Letting $C=eA+1$, by (\ref{wzgz}), we
have
$$
W(z^{\frac1C})G(z,z^{\frac1C})\geq1-e^{-C+eA}\geq1/2.
$$
And
$$
\log W(z^{\frac1C})-\log W(z)=-\sum_{\substack{ z^{\frac1C}\leq p<z\\
p\text{
prime}}}\log(1-\omega(p)/p)\leq B\sum_{\substack{ z^{\frac1C}\leq p<z\\
p\text{ prime}}}\frac{\omega(p)}p\leq \frac32AB\log C.
$$
Thus
$$
W(z)G(z,z)\geq\frac{W(z)}{W(z^\frac1C)}\cdot
W(z^{\frac1C})G(z,z^{\frac1C})\geq \frac{1}{2}C^{-\frac32AB}.
$$
Hence by (\ref{wzgz}).
$$
\frac{1}{G(z,z)}\leq
W(z)\bigg(1+\frac1{W(z)G(z,z)}\exp(-1+eA)\bigg)\leq A^{2AB}W(z).
$$
\end{proof}

Let $\phi$ denote the Euler totient function.
\begin{Lem}
\label{selberg} Suppose that $W,b\geq 1$ with $(W,b)=1$. Suppose
that $x\geq 1$ and $1\leq z\leq x^{\frac13}$. Then
$$
|\{1\leq n\leq x:\,(Wn+b,\sP(z))=1\}|\ll\frac{x}{\log
z}\cdot\frac{W}{\phi(W)}.
$$
In particular,
$$
|\{1\leq n\leq x:\,Wn+b\text{ is prime}\}|\ll\frac{x}{\log
x}\cdot\frac{W}{\phi(W)}.
$$
\end{Lem}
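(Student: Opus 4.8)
The plan is to apply Selberg's upper bound sieve to the progression $\mathcal A=\{Wn+b:\,1\le n\le x\}$, sifting by the primes $p<z$, and then to estimate the resulting main term using Lemma \ref{GzWz} and Lemma \ref{mertens}.

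First I would observe that since $(W,b)=1$, every prime $p\mid W$ divides $Wn$ but not $b$, hence not $Wn+b$; so the condition $(Wn+b,\sP(z))=1$ is equivalent to $p\nmid Wn+b$ for all primes $p<z$ with $p\nmid W$. For such a prime $(p,W)=1$, so for any squarefree $d$ with $(d,W)=1$ the congruence $Wn+b\equiv0\pmod d$ fixes $n$ modulo $d$, whence $|\{1\le n\le x:\,d\mid Wn+b\}|=x/d+r_d$ with $|r_d|\le1$. Thus the relevant sieve density is $\omega(p)=\1[p\nmid W]$, and this satisfies the hypothesis of Lemma \ref{GzWz} with $A=B=101$, since $0\le\omega(p)\le1\le\min\{101,(1-101^{-1})p\}$ for every prime $p\ge2$. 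I may also assume $z>e^{10100}$ --- for $z\le e^{10100}$ we have $\log z=O(1)$ and the trivial bound $x$ already suffices --- and, after adjusting the implied constant, that $x$ exceeds any fixed absolute constant.

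Now Selberg's sieve (in the form of \cite{HalberstamRichert74}) gives
$$
|\{1\le n\le x:\,(Wn+b,\sP(z))=1\}|\le\frac{x}{G(z)}+\sum_{\substack{d<z^2\\ d\ \text{squarefree}}}3^{\omega(d)},
$$
where $G(z)$ is exactly the quantity of Lemma \ref{GzWz}. The remainder sum is $\ll z^2(\log z)^2\ll x^{2/3}(\log x)^2$, which is $\ll x/\log z$ because $z\le x^{1/3}$. For the main term, Lemma \ref{GzWz} yields $1/G(z)\le101^{2\cdot101^2}W(z)$ with $W(z)=\prod_{p<z,\,p\nmid W}(1-p^{-1})$; factoring out the primes dividing $W$ and using $\prod_{p<z,\,p\mid W}(1-p^{-1})\ge\prod_{p\mid W}(1-p^{-1})=\phi(W)/W$, I get
$$
W(z)\le\frac{W}{\phi(W)}\prod_{\substack{p<z\\ p\ \text{prime}}}\Bigl(1-\frac1p\Bigr)\ll\frac{W}{\phi(W)}\cdot\frac1{\log z}
$$
by Lemma \ref{mertens}. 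Hence $x/G(z)\ll(x/\log z)(W/\phi(W))$, and adding the remainder estimate proves the first assertion.

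For the ``in particular'' clause I would take $z=x^{1/3}$: if $Wn+b$ is prime with $1\le n\le x$ then either $Wn+b<z$, which holds for at most $z/W\le x^{1/3}$ values of $n$, or $Wn+b$ has no prime factor below $z$, i.e.\ $(Wn+b,\sP(z))=1$; applying the first assertion with $\log z=\tfrac13\log x$ then bounds the count by $\ll(x/\log x)(W/\phi(W))+x^{1/3}\ll(x/\log x)(W/\phi(W))$. The only point that needs any care is keeping all implied constants absolute, independent of $W,b,x,z$ --- which is precisely why the factor $W/\phi(W)$ must be carried through the Mertens step rather than swallowed --- together with checking that the sieve remainder is dominated by $x/\log z$, which is where the restriction $z\le x^{1/3}$ enters; the arithmetic heart of the argument has already been packaged into Lemma \ref{GzWz}.
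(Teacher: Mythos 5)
Your proof is correct and is essentially the paper's argument: the paper disposes of this lemma with the one-line remark that it is ``a simple application of the Selberg sieve method'' (citing Theorems 3.2 and 4.1 of \cite{HalberstamRichert74}), and your write-up simply carries out that application, with the density $\omega(p)=\1[p\nmid W]$, the remainder controlled by $z\le x^{1/3}$, and the main term bounded via Lemma \ref{GzWz} together with Mertens' theorem in place of a direct appeal to Theorem 4.1 of \cite{HalberstamRichert74}. The details you supply (including the reduction of the primality statement to the case $z=x^{1/3}$) are sound, so this is a faithful filling-in of the paper's intended proof rather than a different route.
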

\begin{proof} This is a simple application of the Selberg sieve
method (cf. \cite[Theorems 3.2 and 4.1]{HalberstamRichert74}).
\end{proof}
\begin{Lem}\label{omegak} Suppose that $x\geq 1$ and $1\leq k\leq\log\log x$. Suppose that
$W,b\geq 1$ and $(W,b)=1$. If $W\leq x^{\frac1{2k}}$ and $\log x\leq
x^{\frac{1}{6k}}$, then
$$
|\{1\leq n\leq x:\, \omega(Wn+b)=k\}|\leq\frac{C^kx(\log\log
x)^{k-1}}{\log x}\cdot\frac{W}{\phi(W)},
$$
where $C>0$ is a constant.
\end{Lem}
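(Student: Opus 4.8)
The plan is to bound the count $|\{1\le n\le x:\ \omega(Wn+b)=k\}|$ by sorting the values $Wn+b$ according to the set of their $k$ prime factors. Write $Wn+b = q_1^{e_1}\cdots q_k^{e_k}$ with distinct primes $q_1<\cdots<q_k$. The key observation is that since $Wn+b\le Wx+b \ll x^{1+1/(2k)}$, at most one of the prime powers $q_i^{e_i}$ can exceed $x^{1/(2k)}$ in the ``small-prime'' range; more usefully, I want to set a threshold $z = x^{1/(6k)}$ (which is legitimate since $\log x\le x^{1/(6k)}$, so $z\ge\log x$) and split into the case where $Wn+b$ has a prime factor $q\ge z$ and the case where all prime factors are $<z$. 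In the latter case $Wn+b\le \prod_{i}q_i^{e_i}$, but with only $k$ distinct primes all below $z=x^{1/(6k)}$ the total is at most $z^{(\text{bounded exponent})}$, which is too small once we check that the exponents can't all be large — actually the cleaner route is: if all $k$ prime factors are $<z$ then $Wn+b$ is $z^k$-smooth and $\ge x$, forcing the largest exponent to be $\ge$ something, but I expect this case to contribute only $O(x^{1-\delta})$ by a standard smooth-number count and so be negligible against the claimed main term $x(\log\log x)^{k-1}/\log x$.

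The main case is therefore when $Wn+b = m\cdot q$ with $q\ge z$ prime, $q\nmid m$, and $m$ has exactly $k-1$ distinct prime factors all recorded (with multiplicity) in $m$, so $m\le (Wn+b)/z \ll x^{1+1/(2k)}/x^{1/(6k)} = x^{1+1/(3k)}/\text{something}\le x$ for $x$ large. Fix such an $m$; then $n$ is constrained by $Wn+b\equiv 0\pmod m$ and $(Wn+b)/m = q$ is prime. Since $(W,b)=1$ we need $(W,m)\mid b$ hence $(W,m)=1$, and $n$ runs over a single residue class mod $m$; writing $n = n_0 + mt$ we get $(Wn+b)/m = Wt + b'$ for appropriate $b'$ with $(Wm, b')=\gcd$ — more precisely $q = (W/\,?)\,$ lies in an arithmetic progression, and the number of $t\le x/m$ with $Wt+b'$ prime is, by the second part of Lemma \ref{selberg}, $\ll \dfrac{x/m}{\log(x/m)}\cdot\dfrac{W}{\phi(W)}$. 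Since $q\ge z = x^{1/(6k)}$ we have $m\le x^{1-1/(6k)}$, so $\log(x/m)\ge \tfrac{1}{6k}\log x \gg_k \log x$, giving the bound $\ll_k \dfrac{x}{m\log x}\cdot\dfrac{W}{\phi(W)}$ for each fixed $m$.

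Summing over the admissible $m$, the total is $\ll_k \dfrac{x}{\log x}\cdot\dfrac{W}{\phi(W)}\sum_{m}\dfrac1m$, where $m$ ranges over integers $\le x$ with exactly $k-1$ distinct prime factors. The final step is the classical estimate $\sum_{\omega(m)=k-1,\ m\le x} 1/m \le \dfrac1{(k-1)!}\Big(\sum_{p\le x}\sum_{e\ge1}\dfrac1{p^e}\Big)^{k-1} = \dfrac1{(k-1)!}\big(\log\log x + O(1)\big)^{k-1} \le C^k(\log\log x)^{k-1}$ for a suitable absolute $C$, using Lemma \ref{mertens} (Mertens) for $\sum_{p\le x}1/p = \log\log x + O(1)$ and the trivial convergence of the prime-power tail. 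Combining, the main case contributes $\le C^k\dfrac{x(\log\log x)^{k-1}}{\log x}\cdot\dfrac{W}{\phi(W)}$, which absorbs the negligible smooth-number case and proves the lemma. I expect the main obstacle to be the bookkeeping in setting up the arithmetic progression for $q$: one must verify that the relevant modulus is essentially $W$ (not $Wm$), i.e.\ that after factoring out $m$ the progression has modulus $W$ with a coprime residue, so that the $W/\phi(W)$ factor (and not a larger $Wm/\phi(Wm)$ factor) appears — this is where the hypothesis $(W,b)=1$ is used, and care is needed to handle the primes dividing $m$ cleanly, but it is a finite check rather than a genuine difficulty.
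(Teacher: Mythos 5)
Your decomposition has a genuine gap at its central step, the per-$m$ application of Lemma \ref{selberg}. You claim that $q\ge z=x^{1/(6k)}$ forces $m=(Wn+b)/q\le x^{1-1/(6k)}$, hence $\log(x/m)\gg_k\log x$. This is false: the values $Wn+b$ range up to $Wx+b$, and $W$ may be as large as $x^{1/(2k)}$, so $m$ can be as large as roughly $x^{1+1/(2k)-1/(6k)}=x^{1+1/(3k)}$, which exceeds $x$ (your own earlier computation shows this before the erroneous ``$\le x$ for $x$ large''). For $m$ in the range $(x^{1-1/(6k)},\,x^{1+1/(3k)}]$ the bound $\frac{x/m}{\log(x/m)}\cdot\frac{W}{\phi(W)}$ either does not apply or loses the crucial $1/\log x$ factor, and summing the trivial bound $x/m+1$ over all such $m$ with $\omega(m)=k-1$ is far too large; these $m$ correspond to $n$ for which the chosen prime $q$ lies in $[x^{1/(6k)},\,\approx x^{1/(2k)}]$, and controlling them needs a genuinely new idea (or information about $(k-1)$-almost-primes of size $\approx x$, which is essentially what you are trying to prove). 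There are also two smaller problems: your fallback for the case that all prime factors of $Wn+b$ are $<z$ (``a standard smooth-number count'') does not work as stated, because the values lie in a sparse progression mod $W$ and comparing with all smooth numbers up to $Wx$ loses a factor $W\approx x^{1/(2k)}$ --- though your alternative remark about a large exponent is the right fix: one gets a prime power $p^e\mid Wn+b$ with $e\ge2$ and $p^e\ge x^{1/(2k)}$, and summing $x/p^e+1$ is $\le x/\log x$ precisely because of the hypothesis $\log x\le x^{1/(6k)}$; and your two cases do not literally cover $n$ for which every prime factor $\ge z$ divides $Wn+b$ to order $\ge2$ (easily handled the same way, but it should be said).

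For comparison, the paper avoids all of this by inducting on $k$ and peeling off a \emph{small} prime power: it splits according to a prime $p\le x^{1/(3k)}$, $p\nmid W$, dividing $Wn+b$ exactly to the power $p^\alpha\le x^{1/k}$ (then $n$ reduces to a progression problem on $[1,x/p^\alpha]$ with $x/p^\alpha\ge x^{(k-1)/k}$, so the induction hypothesis applies with the same $W/\phi(W)$ and with $\log(x/p^\alpha)\gg\log x$), plus the easy case $p^\alpha\ge x^{1/k}$, plus the case that $Wn+b$ has no prime factor below $x^{1/(3k)}$, which is exactly Lemma \ref{selberg}. Removing the smallest factor keeps the residual interval long, which is the point your ``largest prime factor'' decomposition misses; your Mertens/$\frac1{(k-1)!}$ bookkeeping at the end is fine but cannot rescue the main step.
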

\begin{proof}
We use induction on $k$. The case $k=1$ easily follows from Lemma
\ref{selberg}. Suppose that $k\geq 2$. Then,
\begin{align*}
&|\{1\leq n\leq x:\, \omega(Wn+b)=k\}|\\
\leq&\sum_{\substack{1\leq p\leq x^{\frac1{3k}},\ p\nmid W\\
\alpha\geq 1,\ p^\alpha\leq x^{\frac1{k}}}}|\{1\leq
n\leq x/p^{\alpha}:\,\omega(Wn+b')=k-1,\ \text{where }b'\text{ satisfies }b'p^\alpha\equiv b\pmod{W}\}|\\
&+\sum_{\substack{p\leq x^{\frac1{3k}},\ p\nmid W\\ \alpha\geq 1,\
p^\alpha\geq x^{\frac1{k}}}}|\{1\leq n\leq x:\,
Wn+b\equiv0\pmod{p^\alpha},\ ((Wn+b)/p^\alpha,p)=1
\}|\\
&+|\{1\leq n\leq x:\,(Wn+b,\sP(x^{\frac1{3k}}))=1\}|.
\end{align*}
By Lemma \ref{selberg},
$$
|\{1\leq n\leq x:\, (Wn+b,\sP(x^{\frac1{3k}}))=1\}|\leq
\frac{c_1x}{\log(x^{\frac1{3k}})}\prod_{p\mid
W}\bigg(1-\frac{1}{p}\bigg)^{-1}
$$
for some constant $c_1>0$. And
\begin{align*}
&\sum_{\substack{p\leq x^{\frac1{3k}},\ p\nmid W\\\alpha\geq 1,\
p^\alpha\geq x^{\frac1{k}}}}|\{1\leq n\leq x:\, Wn+b\equiv
0\pmod{p^\alpha},\ ((Wn+b)/p^\alpha,p)=1
\}|\\
\leq&2\sum_{\substack{p\leq x^{\frac1{3k}}\\\alpha\geq 3,\
x^{\frac1{k}}\leq p^\alpha\leq x
}}\frac{x}{p^\alpha}+\sum_{\substack{p\leq
x^{\frac1{3k}}}}1=O\bigg(x^{1-\frac{1}{k}}\cdot\frac{x^{\frac1{3k}}}{\log(x^{\frac1{3k}})}\cdot\log
x\bigg)\leq c_2kx^{1-\frac{2}{3k}}
\end{align*}
for some $c_2>0$. Further, by Lemma \ref{mertens}, for
$$
\sum_{\substack{p\leq x^{\frac1{3k}}\\ p\text{
prime}}}\frac{1}{p-1}\leq c_3\log\log x
$$
for some $c_3>0$. Choose $C\geq2(c_1+c_2+c_3)$. Notice that for
$p^\alpha<x^{\frac1{k}}$,
$$
x/{p^\alpha}\geq x^{\frac{k-1}{k}}\geq \max\{W^{2(k-1)}, (\log
x)^{6(k-1)}\}
$$
and
$$
k-1\leq\log\log
x-1=\log\log(x^{\frac{1}{e}})\leq\log\log(x/p^\alpha).
$$
So by the induction hypothesis,
$$
|\{1\leq n\leq
x/p^\alpha:\,\omega(Wn+b')=k-1\}|\leq\frac{C^{k-1}x(\log\log(x/p^\alpha))^{k-2}}{p^\alpha\log(x/p^\alpha)}\cdot\frac{W}{\phi(W)}.
$$
Hence,
\begin{align*}
&|\{1\leq n\leq x:\, \omega(Wn+b)=k\}|\\
\leq&\frac{2C^{k-1}Wx(\log\log x)^{k-2}}{\phi(W)\log
x}\sum_{\substack{p\leq x^{\frac1{2k}}\\ p\text{
prime}}}\frac{1}{p-1}+\frac{2c_1kx}{\phi(W)\log
x}+c_2x^{1-\frac{2}{3k}}\log\log x\\
\leq&\frac{C^kx(\log\log x)^{k-1}}{\log x}\cdot\frac{W}{\phi(W)}.
\end{align*}
\end{proof}

Now suppose that $x$ is sufficiently large. Let $L=\lfloor
(\log\log\log x)^{\frac{1}{3}-\epsilon}\rfloor+1$ and
$Q=\exp((\log\log x)^{1-\epsilon})$. Clearly $L\ll(\log\log
 Q)^{\frac{1}{3}-\epsilon}$.
Let
$$
\cR=\{(a,j,k,l):\,2\leq a\leq K,\ 1\leq j,|k|\leq K,\ 1\leq l\leq
L\}
$$
and $M=\floor{16K^2}+1$. Clearly $M$ is a constant only depending on
$K$, and $ML\geq16K^2L=8|\cR|$.

Below we shall choose some distinct primes
$$p_{\r,t},\ \r\in\cR,\ 1\leq t\leq T_{\r}$$
in the interval $[\exp((\log Q)^{\frac{4}{ML}}),Q]$ satisfying that
\begin{equation}
\label{productp} \frac{1}{2}(\log
Q)^{\frac{1}{ML}}\leq\prod_{t=1}^{T_{\r}}\bigg(1-\frac{1}{p_{\r,t}}\bigg)^{-1}\leq\frac{3}{2}(\log
Q)^{\frac{1}{ML}} \end{equation} for any fixed $\r\in\cR$. And
assume that we have chosen primes $p_{\r,t}, 1\leq t\leq T_{\r}$ for
some $\r=(a,j,k,l)\in\cR$. Then for any
$I\subseteq\{1,\ldots,T_\r\}$, let $$ m_{\r,I}=\prod_{t\in
I}p_{\r,t}$$ and $q_{\r,I}$ be the largest primitive prime factor of
$a^{m_{\r,I}}-1$, i.e., $q_{\r,I}\mid a^{m_{\r,I}}-1$ but
$q_{\r,I}\nmid a^{m}-1$ for any $1\leq m<m_{\r,I}$. In particular,
we set $m_{\r,\emptyset}=1$.

First, let $p_{\r,t},\ \r=(2,j,k,l)\in\cR,\ 1\leq t\leq T_\r$ be
distinct primes in the interval $[\exp((\log
Q)^{\frac{4}{ML}}),\exp((\log Q)^{\frac{4+4K^2L}{ML}})]$ satisfying
that
$$
\frac{1}{2}(\log
Q)^{\frac{1}{ML}}\leq\prod_{t=1}^{T_{\r}}\bigg(1-\frac{1}{p_{\r,t}}\bigg)^{-1}\leq\frac{3}{2}(\log
Q)^{\frac{1}{ML}}.
$$
Suppose that $a>2$ and we have chosen distinct prime $p_{\r',t},
1\leq t\leq T_{\r'}$ in the interval $[\exp((\log
Q)^{\frac{4}{ML}}),\exp((\log Q)^{\frac{4+8(a-1)K^2L}{ML}})]$ for
every $\r'=(a',j,k,l)\in\cR$ with $2\leq a'<a$. Let
$$
w_{a}=\prod_{\substack{\r'=(a',j,k,l)\in\cR\\
2\leq a'<a}}\prod_{\substack{I\subseteq\{1,\ldots,T_{\r'}\}\\ 1\leq
|I|\leq 2ML^2}}(q_{\r',I}-1).
$$
Clearly,
\begin{align*}
\frac{\log(w_a)}{\log a}\leq&\sum_{\substack{\r'=(a',j,k,l)\in\cR\\
2\leq a'<a}}\sum_{\substack{I\subseteq\{1,\ldots,T_{\r'}\}\\ 1\leq
|I|\leq
2ML^2}}m_{\r',I}\leq\bigg(\sum_{\substack{\r'=(a',j,k,l)\in\cR\\
2\leq a'<a}}\sum_{t=1}^{T_{\r'}}p_{\r',t}\bigg)^{2ML^2}\\
\leq&(\exp((\log
Q)^{\frac{4+8(a-1)K^2L}{ML}})^2)^{2ML^2}=\exp(4ML^2(\log
Q)^{\frac{4+8(a-1)K^2L}{ML}}).
\end{align*}
Thus we get
$$
\omega(w_{a})\leq\frac{\log(w_a)}{\log 2}\leq\exp({5ML^2}(\log
Q)^{\frac{4+8(a-1)K^2L}{ML}})\leq\exp((\log
Q)^{\frac{5+8(a-1)K^2L}{ML}}),
$$
by noting that
\begin{align*}
\log(5ML^2(\log
Q)^{\frac{4+8(a-1)K^2L}{ML}})=&\log(5ML^2)+\frac{4+8(a-1)K^2L}{ML}\log\log
Q\\
\leq&\frac{5+8(a-1)K^2L}{ML}\log\log Q. \end{align*} Furthermore, by
the prime number theorem, there exist
$$
(1+o(1))\bigg(\frac{\exp((\log Q)^{\frac{6+8(a-1)K^2L}{ML}})}{(\log
Q)^{\frac{6+8(a-1)K^2L}{ML}}}-\frac{\exp((\log
Q)^{\frac{5+8(a-1)K^2L}{ML}})}{(\log
Q)^{\frac{5+8(a-1)K^2L}{ML}}}\bigg)\geq\exp((\log
Q)^{\frac{5+8(a-1)K^2L}{ML}})
$$
primes in the interval $[\exp((\log
Q)^{\frac{5+8(a-1)K^2L}{ML}}),\exp((\log
Q)^{\frac{6+8(a-1)K^2L}{ML}})]$, by noting that clearly $\exp((\log
Q)^{\frac{1}{ML}})\geq 4\log\log Q$. Notice that
$$
\prod_{\substack{\exp((\log Q)^{\frac{5+8(a-1)K^2L}{ML}})\leq p\leq
\exp((\log
Q)^{\frac{6+8(a-1)K^2L}{ML}})\\
p\text{ prime}}}\bigg(1-\frac{1}{p}\bigg)^{-1}\leq(\log
Q)^{\frac{6+8(a-1)K^2L}{ML}}
$$
and
$$
\prod_{\substack{\exp((\log Q)^{\frac{5+8(a-1)K^2L}{ML}})\leq p\leq
\exp((\log
Q)^{\frac{4+8aK^2L}{ML}})\\
p\text{ prime}}}\bigg(1-\frac{1}{p}\bigg)^{-1}\geq\frac{1}{2}(\log
Q)^{\frac{4+8aK^2L}{ML}}.
$$
Hence,
$$
\prod_{\substack{\exp((\log Q)^{\frac{5+8(a-1)K^2L}{ML}})\leq p\leq
\exp((\log
Q)^{\frac{4+8aK^2L}{ML}})\\
p\text{ prime and }p\nmid
w_{a}}}\bigg(1-\frac{1}{p}\bigg)^{-1}\geq\frac{1}{2}(\log
Q)^{\frac{8K^2L-2}{ML}}\geq (\log Q)^{\frac{4K^2L}{ML}}.
$$
Thus we may choose distinct primes $p_{\r,t},\ \r=(a,j,k,l)\in\cR,\
1\leq t\leq T_\r$ in the interval $[\exp((\log
Q)^{\frac{5+8(a-1)K^2L}{ML}}),\exp((\log Q)^{\frac{4+8aK^2L}{ML}})]$
satisfying that $p_{\r,t}\nmid w_a$ and
$$
\frac{1}{2}(\log
Q)^{\frac{1}{ML}}\leq\prod_{t=1}^{T_{\r}}\bigg(1-\frac{1}{p_{\r,t}}\bigg)^{-1}\leq\frac{3}{2}(\log
Q)^{\frac{1}{ML}}.
$$
Repeat this process from $a=3$ to $K$, until we complete the choices
of $p_{\r,t},\ 1\leq t\leq T_\r$ for all $\r\in\cR$.

Since $p_{\r,t}\nmid w_a$ for any $\r=(a,j,k,l)\in\cR$ and
$p_{\r,t}\mid q_{\r,I}-1$ for any $t\in
I\subseteq\{1,\ldots,T_{\r}\}$ with $|I|\leq 2ML^2$, we have
$q_{r,I}\not=q_{\r',I'}$ for every $\r'=(a',j,k,l)\in\cR$ with
$a'<a$ and $I'\subseteq\{1,\ldots,T_{\r'}\}$ with with $1\leq
|I'|\leq 2ML^2$. That is, all these $q_{\r,I}$ are distinct. And
since there are
$$
(1+o(1))\bigg(\frac{\exp((\log
Q)^{\frac{4}{ML}}\cdot\frac{1}{2}(\log Q)^{\frac{1}{ML}})}{(\log
Q)^{\frac{4}{ML}}\cdot\frac{1}{2}(\log
Q)^{\frac{1}{ML}}}-\frac{\exp((\log Q)^{\frac{4}{ML}})}{(\log
Q)^{\frac{4}{ML}}}\bigg)
$$
primes in $[\exp((\log Q)^{\frac{4}{ML}}),\exp(\frac12(\log
Q)^{\frac{5}{ML}})]$, clearly we have $T_{\r}\geq (2ML^2)^2$.

For each $\r=(a,j,k,l)\in\cR$, let
$$
W_\r=\prod_{\substack{I\subseteq\{1,\ldots,T_\r\}\\ 1\leq |I|\leq
2ML^2}}q_{\r,I}
$$
and let $b_\r$ be an integer such that
$$
b_\r j+a^{|I|-1}k\equiv0\pmod{q_{\r,I}}
$$
for every $I\subseteq\{1,\ldots,T_r\}$ with $1\leq|I|\leq 2ML^2$.
Let
$$
W=\prod_{\r\in\cR}W_\r
$$
and let $b$ be an integer such that
$$
b\equiv b_\r\pmod{W_\r}
$$
for every $r\in\cR$. Then
$$
W\leq\prod_{\r\in\cR}\prod_{\substack{I\subseteq\{1,\ldots,T_\r\}\\
1\leq |I|\leq
2ML^2}}(K^{m_{\r,I}}-1)<K^{\sum_{\r\in\cR}(1+p_{\r,1}+\ldots+p_{\r,T_\r})^{2ML^2}}\leq
K^{2K^3LQ^{4ML^2}}.
$$
Since
$$
\log\log W\ll\log(2K^2L)+4ML^2\log Q\ll(\log\log\log
x)^{\frac{2}{3}-2\epsilon}\cdot(\log\log x)^{1-\epsilon},
$$
we have $W\leq x^{\frac{\epsilon}2}$ provided that $x$ is
sufficiently large. Let
$$
\S=\{x\leq n\leq (1+K^{-1})x:\,n\equiv b\pmod{W}\}
$$
and
$$
\T=\{n\in\S:\,\omega(nj+a^hk)<L\text{ for some }2\leq a\leq K,\
1\leq j,|k|\leq K,\ 0\leq h\leq K\log x\}.
$$
For any $\r\in\cR$, let $\cH_\r$ be the set
$$
\{0\leq h\leq K\log x:\,h\not\equiv |I|-1\pmod{m_{\r,I}}\text{ for
any }I\subseteq\{1,\ldots,T_\r\}\text{ with }1\leq |I|\leq 2ML^2\}.
$$
The following lemma is the key of our proof.
\begin{Lem}
\label{ur}
$$
|\cH_\r|\ll \frac{(16ML^2)^{8M^2L^4}(\log\log Q)^{4M^2L^4}}{(\log
Q)^{2L}}\cdot K\log x.
$$
\end{Lem}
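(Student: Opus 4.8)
The plan is to reinterpret membership in $\cH_\r$ in terms of the distribution of the residues $h\bmod p_{\r,t}$, and then to estimate the resulting count by a high‑dimensional Selberg sieve. Write $\kappa=2ML^2$ and $X=\lfloor K\log x\rfloor$, and for $0\le v<\kappa$ set $c_v(h)=\#\{1\le t\le T_\r:\ h\equiv v\pmod{p_{\r,t}}\}$. Since the $p_{\r,t}$ are distinct primes, $h\equiv v\pmod{m_{\r,I}}$ holds for some $I$ with $|I|=v+1$ precisely when $c_v(h)\ge v+1$, so $h\in\cH_\r$ iff $c_v(h)\le v$ for all $0\le v<\kappa$. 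Assigning to such an $h$ the pairwise disjoint sets $I_v(h)=\{t:\ h\bmod p_{\r,t}=v\}$ (which satisfy $|I_v(h)|\le v$, while $h\bmod p_{\r,t}\ge\kappa$ for $t\notin\bigcup_vI_v(h)$) gives the exact decomposition
$$
|\cH_\r|=\sum_{(I_0,\dots,I_{\kappa-1})}\Big|\big\{0\le h\le X:\ h\equiv v\!\!\pmod{p_{\r,t}}\ (t\in I_v),\ \ h\bmod p_{\r,t}\ge\kappa\ (t\notin\textstyle\bigcup_v I_v)\big\}\Big|,
$$
the sum over all $\kappa$‑tuples of pairwise disjoint $I_v\subseteq\{1,\dots,T_\r\}$ with $|I_v|\le v$.

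For a fixed tuple put $I=\bigcup_vI_v$ and $W_I=\prod_{t\in I}p_{\r,t}$. By the Chinese Remainder Theorem the congruences $h\equiv v\pmod{p_{\r,t}}$ $(t\in I_v)$ amount to a single congruence $h\equiv b_I\pmod{W_I}$, and inside that progression the remaining conditions sift out $\kappa$ residues modulo each $p_{\r,t}$ with $t\notin I$. All the $p_{\r,t}$ are $\le Q$, and both $Q$ and $W_I\le Q^{\binom\kappa2}$ are of size $(\log x)^{o(1)}$ (here one uses $L\ll(\log\log Q)^{1/3-\epsilon}$, so that $\binom\kappa2\log Q=o(\log\log x)$). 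Hence the Selberg sieve (\cite[Theorems 3.2 and 4.1]{HalberstamRichert74}) with sieving parameter $Q^2$, together with Lemma \ref{GzWz} applied to the multiplicative function equal to $\kappa$ on $\{p_{\r,t}:t\notin I\}$ and to $0$ elsewhere — with $A=\kappa$, $B=101$, which is legitimate since $\kappa\le p_{\r,t}/2$ and $Q^2>e^{100\kappa}$ — bounds each inner cardinality by
$$
\Big(\frac{X}{W_I}+1\Big)\,\kappa^{202\kappa}\prod_{t\notin I}\Big(1-\frac{\kappa}{p_{\r,t}}\Big)\ +\ E_I,\qquad E_I\ll Q^4(3\kappa)^{O((\log Q)^{1-4/ML})}=(\log x)^{o(1)}.
$$
Since the number of tuples is at most $(\log x)^{o(1)}$, the total of the $E_I$ is negligible compared with the right‑hand side of the asserted inequality, which exceeds $X^{1/2}$.

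It remains to sum the main terms. Put $\lambda=\sum_{t=1}^{T_\r}1/p_{\r,t}$. From $\log(1-y)\le-y$ and $\sum_{t\in I}1/p_{\r,t}\le\binom\kappa2\exp(-(\log Q)^{4/ML})=o(1)$ we get $\prod_{t\notin I}(1-\kappa/p_{\r,t})\le e^{-\kappa\lambda}\prod_{t\in I}(1-\kappa/p_{\r,t})^{-1}\le 2e^{-\kappa\lambda}$. Summing over the tuples and relaxing disjointness, $\sum_{(I_v)}W_I^{-1}\le\prod_{v=0}^{\kappa-1}\sum_{j=0}^{v}e_j(1/p_{\r,1},\dots,1/p_{\r,T_\r})$, where $e_j$ is the $j$th elementary symmetric polynomial; using $j!\,e_j\le\lambda^j$ and $1/j!\le\binom vj$ for $0\le j\le v$, each factor is $\le\sum_{j=0}^v\binom vj\lambda^j=(1+\lambda)^v$, so $\sum_{(I_v)}W_I^{-1}\le(1+\lambda)^{\binom\kappa2}$. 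Combining, $|\cH_\r|\ll\kappa^{202\kappa}e^{-\kappa\lambda}(1+\lambda)^{\binom\kappa2}X+(\log x)^{o(1)}$. Finally (\ref{productp}) gives $\lambda=\tfrac1{ML}\log\log Q+O(1)$, hence $e^{-\kappa\lambda}=(\log Q)^{-2L}e^{O(ML^2)}$ and $1+\lambda\le2\log\log Q$ for large $x$; substituting $\kappa=2ML^2$ and $\binom\kappa2\le2M^2L^4$ and absorbing $\kappa^{202\kappa}$, $2^{2M^2L^4}$ and $e^{O(ML^2)}$ into the constants and lower‑order factors of the claimed bound (again because $L\ll(\log\log Q)^{1/3-\epsilon}$) yields
$$
|\cH_\r|\ \ll\ \frac{(16ML^2)^{8M^2L^4}(\log\log Q)^{4M^2L^4}}{(\log Q)^{2L}}\cdot K\log x,
$$
as required.

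The one place where real care is needed is the sieve error accounting: one must check that in every configuration the modulus $W_I$ and the sifted primes are small enough — powers of $\log x$ with exponent $o(1)$ — that a genuine Selberg saving is available, and that although the number of configurations is superexponential in $ML^2$ it is still only $(\log x)^{o(1)}$, so the accumulated errors stay below the target. Everything else is the elementary bookkeeping above, which goes through comfortably because $L$ grows only like $(\log\log\log x)^{1/3-\epsilon}$ whereas the saving $(\log Q)^{2L}$ carries an exponent of order $(\log\log x)^{1-\epsilon}$.
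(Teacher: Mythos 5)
Your proposal is correct and takes essentially the same route as the paper: you stratify $h\in\cH_\r$ according to the primes $p_{\r,t}$ at which $h$ has a residue below $2ML^2$ (the paper's pigeonhole step, in the sharper per-residue form $c_v(h)\le v$), bound each stratum by Selberg's sieve combined with Lemma \ref{GzWz} with density $2ML^2$, and conclude by summing with (\ref{productp}). The remaining differences --- an exact tuple decomposition instead of the paper's covering by pairs $(J,c)$, sieving the progression $h\equiv b_I\pmod{W_I}$ directly at level $Q^2$ rather than the products $\prod_{0\le s\le 2ML^2-1}(h-s)$ at level $(K\log x)^{1/8}$, and the elementary-symmetric-function bookkeeping --- are cosmetic.
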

\begin{proof}
Suppose that $h\in\cH_\r$. Clearly, by the pigeonhole principle and
the definition of $\cH_\r$, we have
$$
|\{t\in\{1,\ldots,T_\r\}:\,\{h\}_{p_{\r,t}}< 2ML^2\}|<(2ML^2)^2,
$$
where $\{h\}_p$ denotes the least non-negative reside of $h$ modulo
$p$. Therefore
$$
\cH_\r\subseteq\bigcup_{\substack{J\subseteq\{1,\ldots,T_\r\}\\
0\leq |J|<(2ML^2)^2\\ c\in \mathcal{C}_J}}\{1\leq h\leq K\log x:\,
h\equiv c\pmod{m_{\r,J}},\ (\prod_{0\leq
s\leq2ML^2-1}(h-s),m_{J}^{*})=1\},
$$
where
$$
m_{J}^{*}=\frac{1}{m_{\r,J}}\prod_{t=1}^{T_\r}p_{\r,t}
$$
and
$$
\mathcal{C}_J=\{0\leq c<m_{\r,J}:\, \{c\}_{p_t}< 2ML^2\text{ for all
}t\in J\}.
$$

For any $J\subseteq\{1,\ldots,T_\r\}$ with $0\leq
|J|\leq(2ML^2)^2-1$ and $c\in\mathcal{C}_J$, let
$$
\cA_{J,c}=\{\prod_{0\leq s\leq2ML^2-1}(m_{\r,J}d+c-s):\,0\leq d\leq
K\log x/m_{\r,J}\}.
$$
Since
$$
\log(Q^{(2ML^2)^2})=4M^2L^4\log Q\ll(\log\log
x)^{1-\frac{\epsilon}2},
$$
we have $m_{\r,J}\leq Q^{(2ML^2)^2}\leq (\log x)^{\frac18}$. Let
$z=(K\log x)^{\frac18}$. Applying Selberg's sieve method, we have
$$
|\{u\in\cA_{J,c}:\, (u,m_{J}^{*})=1\}|\leq
\frac{|\cA_{J,c}|}{G(z)}+\sum_{\substack{d\mid m_J^*\\
d<z^2}}3^{\omega(d)}|r_d|,
$$
where
$$
G(z)=\sum_{\substack{d\mid m_J^*\\
d<z}}\prod_{p\mid d}\frac{2ML^2}{p-2ML^2}
$$
and
$$
r_d=|\{u\in\cA_{J,c}:\, u\equiv 0\pmod{d}\}|-|\cA_{J,c}|\prod_{p\mid
d}\frac{2ML^2}{p}.
$$
By Lemma \ref{GzWz},
$$
\frac{1}{G(z)}\ll (2ML^2)^{8ML^2}\prod_{p\mid m_J^*}\bigg(1-\frac
{2ML^2}p\bigg)\leq(2ML^2)^{8ML^2}\prod_{p\mid m_J^*}\bigg(1-\frac
{1}p\bigg)^{2ML^2}.
$$
Since $|r_d|\ll(2ML^2)^{\omega(d)}$ and $\omega(d)\ll\log d/\log\log
d$, we have
$$
\sum_{\substack{d\mid m_J^*\\
d<z^2}}3^{\omega(d)}|r_d|\ll(2ML^2)^{O(\frac{\log z}{\log\log
z})}z^{3}.
$$
And noting that $2ML^2\ll(\log\log\log x)^{1-\epsilon}\ll (\log\log
z)^{1-\epsilon}$, we get $(2ML^2)^{O(\frac{\log z}{\log\log z})}\ll
z$.

Thus since $|\mathcal{C}_J|\leq (2ML^2)^{(2ML^2)^2}$,
\begin{align*}
|\cH_\r|\ll&(2ML^2)^{(2ML^2)^2}\sum_{\substack{J\subseteq\{1,\ldots,T_\r\}\\
0\leq |J|\leq (2ML^2)^2-1}}(2ML^2)^{800ML^2}\frac{K\log
x}{m_{\r,J}}\prod_{p\mid
m_J^*}\bigg(1-\frac {1}p\bigg)^{2ML^2}\\
\leq&(2ML^2)^{8M^2L^4}\bigg(1+\sum_{t=1}^{T_\r}\frac{1}{p_{\r,t}-1}\bigg)^{(2ML^2)^2}\prod_{t=1}^{T_\r}\bigg(1-\frac {1}{p_{\r,t}}\bigg)^{2ML^2}\cdot K\log x\\
\ll&\frac{(16ML^2)^{8M^2L^4}(\log\log Q)^{4M^2L^4}}{(\log
Q)^{2L}}\cdot K\log x.
\end{align*}

\end{proof}

Let $\cH=\bigcup_{\r\in\cR}\cH_\r$. In view of Lemma \ref{ur},
\begin{align}
|\cH|\ll&|\cR|\cdot\frac{(16ML^2)^{8M^2L^4}(\log\log
Q)^{4M^2L^4}}{(\log Q)^{2L}}\cdot K\log
x\notag\\
\leq&\frac{2K^4L(32ML^2)^{8M^2L^4}(\log\log\log x)^{4M^2L^4}\log
x}{(\log\log x)^{2(1-\epsilon)L}}.
\end{align}
Suppose that $n\in\T$, i.e., there exist $2\leq a\leq K$, $1\leq
j,|k|\leq K$ and $0\leq h\leq K\log x$ such that
$\omega(nj+a^hk)<L$. We claim that $h\in\cH$. In fact, assume on the
contrary that $h\not\in\cH$. Then for any $1\leq l\leq L$, letting
$\r_l=(a,j,k,l)$, there exists $I_l\subseteq\{1,\ldots,T_{\r_l}\}$
with $1\leq |I_l|\leq 2ML^2$ such that
$h\equiv|I_l|-1\pmod{m_{\r_l,I_l}}$. Recalling that $n\equiv
b\pmod{q_{\r_l,I_l}}$ and $q_{\r_l,I_l}\mid a^{m_{\r_l,I_l}}-1$, we
have
$$
nj+a^hk\equiv bj+a^{|I_l|-1}k\equiv0\pmod{q_{\r_l,I_l}}.
$$
It follows that
$$
nj+a^hk\equiv0\pmod{\prod_{1\leq l\leq L}q_{\r_l,I_l}},
$$
and $\omega(nj+a^hk)\geq L$.

Thus we get
$$
\T\subseteq\bigcup_{\substack{\ 1\leq j,|k|\leq K\\
h\in\cH\\ 2\leq a\leq K}}\{x\leq n\leq(1+K^{-1})x:\,\ n\equiv
b\pmod{W},\ \omega(nj+a^hk)<L\}.
$$
Notice that
$$
\log\log ((WK)^{4ML^2})\leq\log(4ML^2)+2\log\log W\ll(\log\log
x)^{1-\frac\epsilon2}.
$$
For fixed $j,k,a,h$, letting $g=(Wj,bj+a^hk)$, by Lemma
\ref{omegak}, we have
\begin{align*}
&|\{x\leq n\leq(1+K^{-1})x:\,\ n\equiv b\pmod{W},\
\omega(nj+a^hk)<L\}|\\
\leq&|\{(x-b)/W\leq n\leq((1+K^{-1})x-b)/W:\,
\omega(Wjn/g+(bj+a^hk)/g)<L\}|\\
\leq&\frac{LC^{L-2}(K^{-1}x/W)(\log\log(K^{-1}x/W))^{L-2}}{\log(K^{-1}x/W)}\cdot\frac{Wj/g}{\phi(Wj/g)}
\leq\frac{2LK^{-1}C^{L-2}xj(\log\log x)^{L-2}}{\phi(Wj)\log x}.
\end{align*}
Therefore
\begin{align*}
|\T|\ll&2K^3\cdot|\cH|\cdot\frac{LC^{L-1}x(\log\log
x)^{L-2}}{\phi(W)\log x}\prod_{\substack{p\leq K\\ p\text{ prime}}}\bigg(1-\frac1p\bigg)^{-1}\\
\ll&\frac{4K^8L(32ML^2)^{8M^2L^4}(\log\log\log x)^{4M^2L^4}\log
x}{(\log\log x)^{2(1-\epsilon)L}}\cdot\frac{LC^{L-1}x(\log\log
x)^{L-2}}{\phi(W)\log x}\\
\ll&\frac{4K^8(32CM\log\log\log x)^{10M^2L^4}x}{W(\log\log
x)^{(1-2\epsilon)L}},
\end{align*}
where the last inequality follows from $W/\phi(W)\ll\log\log W$.
Noting that
$$
\log((32CM\log\log\log x)^{10M^2L^4})\ll M^2L^4\log\log\log\log x\ll
L(\log\log\log x)^{1-\epsilon},
$$
we have
$$
\lim_{x\to+\infty}\frac{(32CM\log\log\log x)^{10M^2L^4}}{(\log\log
x)^{(1-2\epsilon)L}}=0.
$$
It follows that
$$
|\T|\leq \frac{x}{4KW}
$$
provided that $x$ is sufficiently large.

Finally,
\begin{align*}
&|\S\setminus\T|=|\S|-|\T|\geq\frac{x}{KW}-1-\frac{x}{4KW}\gg
x^{1-\frac\epsilon2},
\end{align*}
i.e.,
\begin{align*}
&\{1\leq n\leq x:\,\omega(nj+a^hk)\geq L\text{ for all }2\leq a\leq
K,\ 1\leq j,|k|\leq K,\ 0\leq h\leq K\log x\}
\end{align*}
has at least $x^{1-\epsilon}$ elements for sufficiently large $x$.

\begin{Rem}
Since Tur\'an had proved that $\omega(n)=(1+o(1))\log\log n$ for
almost all integers $n$, we believe that the result of Theorem
\ref{t1} is far from satisfaction. We have the following conjecture.
\begin{Conj}
For any given large $K>0$, small $\epsilon>0$ and sufficiently large
(only depending on $K$ and $\epsilon$) $x$, there exist at least
$x^{1-\epsilon}$ integers $n\in[x,(1+K^{-1})x]$ such that
$\omega(nj\pm a^hk)\geq(\log\log x)^{1-\epsilon}$ for all $2\leq
a\leq K$, $1\leq j,k\leq K$ and $0\leq h\leq K\log x$.
\end{Conj}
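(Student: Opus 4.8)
The natural plan is to re-run the proof of Theorem~\ref{t1} with all parameters rescaled by one iterated logarithm: set $L=\floor{(\log\log x)^{1-\epsilon}}+1$ as the target value of $\omega(nj\pm a^hk)$, and choose $Q$ as large as the sieve estimates permit. The skeleton stays the same. For each tuple $\r=(a,j,k,l)\in\cR$ one reserves a pool of primes $p_{\r,t}$ and, for each small subset $I$ of that pool, a primitive prime divisor $q_{\r,I}\mid a^{m_{\r,I}}-1$ with $m_{\r,I}=\prod_{t\in I}p_{\r,t}$; one builds $W=\prod_{\r\in\cR}W_\r$ and a residue $b$ so that $q_{\r,I}\mid nj+a^hk$ whenever $n\equiv b\pmod W$ and $h\equiv|I|-1\pmod{m_{\r,I}}$; one bounds the exceptional set $\cH=\bigcup_{\r}\cH_\r$ of shifts $h$ caught by no such congruence using Selberg's sieve, as in Lemma~\ref{ur}; and one finishes by discarding, via Lemma~\ref{omegak}, the $o(|\S|)$ integers $n\equiv b\pmod W$ for which some $nj+a^hk$ has fewer than $L$ prime factors. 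If Lemmas~\ref{GzWz}--\ref{omegak} and Lemma~\ref{ur} could be arranged so that the gain $(\log Q)^{2L}$ still dominates the accumulated losses, the conjecture would follow essentially verbatim.

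The obstacle is that, with this gadget, $Q$ cannot be taken large enough. The bound of Lemma~\ref{ur} beats its error term only when $(\log Q)^{2L}$ dominates a loss of shape $(cL^2)^{cL^4}(\log\log Q)^{cL^4}$, which comes from the union over all subsets $J\subseteq\{1,\ldots,T_\r\}$ of size below the pigeonhole threshold $(2ML^2)^2$ and the bad residue classes $\mathcal{C}_J$; taking logarithms, this forces (roughly) $\log\log Q\gg L^3\log\log\log Q$, so one wants $Q$ as large as possible. But $\log q_{\r,I}$ is about $m_{\r,I}\log a$ with $m_{\r,I}$ a product of up to $2ML^2$ of the $p_{\r,t}$, hence $\log\log W$ grows like a fixed power of $\log Q$ times $L^2$, and $W\le x^{\epsilon/2}$ caps $\log Q$ at roughly $(\log\log x)^{1-\epsilon}$, past which $W$ exceeds $x$. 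Feeding this back pins $L$ to a bounded power of $\log\log\log x$; with the present bookkeeping $L\ll(\log\log\log x)^{1/3-\epsilon}$, which is exactly Theorem~\ref{t1}. To reach $L\asymp(\log\log x)^{1-\epsilon}$ one must break one of these two bottlenecks: either shrink the sieve loss in Lemma~\ref{ur} from $(\,\cdot\,)^{L^4}$ to $(\,\cdot\,)^{O(L)}$ --- e.g.\ by choosing the $p_{\r,t}$ so that the pigeonhole threshold $(2ML^2)^2$ can be replaced by $O(L)$, or by a sieve that avoids passing to the union over subsets --- \emph{or} decouple $\log W$ from $Q$ and $L$, by reserving prime divisors of $nj+a^hk$ much smaller than $a^{m_{\r,I}}$ (a slowly-growing gadget in place of $a^m-1$, or a pruned sub-family of the $q_{\r,I}$ supported on a combinatorial design that still produces $L$ distinct divisors from far fewer reservations).

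The hard part will be exactly this decoupling, and I do not see how to carry it out within the present framework: enlarging the prime pool simultaneously helps the sieve and inflates $W$, while the requirement that $q_{\r_1,I_1},\ldots,q_{\r_L,I_L}$ be \emph{distinct} seems to force essentially disjoint prime pools --- the origin of both the $L^2$ in $2ML^2$ and the $L^4$ in the error exponent. Moreover, the purely combinatorial route --- producing many prime divisors of $nj+a^hk$ through congruences of small modulus --- is the limiting case of Erd\H os' covering-system question, and covering systems with arbitrarily large least modulus are now known not to exist, so that route is closed as well. Absent a genuinely new input, a proof of the conjecture would presumably require sieve weights able to detect many prime factors of $nj+a^hk$ at once, in the spirit of the GPY and Maynard sieves, applied over $n$ in the progression $n\equiv b\pmod W$; or an anatomy-of-integers argument showing $nj+a^hk$ has $(1+o(1))\log\log x$ prime factors for almost all $n$ in that progression, uniformly over the $O(\log x)$ relevant shifts, combined with a second-moment estimate over $n$ --- both well beyond what is used here.
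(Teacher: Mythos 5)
This statement is the paper's own \emph{Conjecture}, stated in the closing Remark precisely because the method of Theorem \ref{t1} cannot reach it; the paper offers no proof, so there is nothing to compare your argument against. Your assessment is the correct one: you rightly decline to claim a proof, and your diagnosis of the two bottlenecks matches the actual arithmetic of the paper --- the loss $(2ML^2)^{8M^2L^4}(\log\log Q)^{4M^2L^4}$ in Lemma \ref{ur} must be beaten by the gain $(\log Q)^{2L}$, forcing $L\ll(\log\log Q)^{1/3-\epsilon}$, while $\log\log W\ll ML^2\log Q$ together with $W\leq x^{\epsilon/2}$ caps $\log Q$ at about $(\log\log x)^{1-\epsilon}$, which pins $L$ to a power of $\log\log\log x$ exactly as in Theorem \ref{t1}. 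Since you do not assert the conjecture is proved, there is no gap to flag; your closing suggestions (many-prime-factor sieve weights in the GPY/Maynard spirit, or an anatomy-of-integers argument uniform over the $O(\log x)$ shifts $h$) are reasonable directions, and your observation that the covering-congruence route is closed is consistent with the Erd\H os remark quoted in the introduction, though note that the nonexistence of covering systems with arbitrarily large least modulus is a much later result than this paper.
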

\end{Rem}

\begin{Ack} The author thanks Professor Emmanuel Vantieghem for pointing out an error on the history of Polignac's conjecture in the earlier version. The author also thanks Professor Zhi-Wei Sun for his
helpful discussions.
\end{Ack}


\begin{thebibliography}{99}




\bibitem{CohenSelfridge75} F. Cohen and J. L. Selfridge, \textit{Not every number is the sum or
difference of two prime powers}, Math. Comput., {\bf 29}(1975),
79-81.

\bibitem{vanderCorput50} J. G. van der Corput, \textit{On de Polignac¡¯s conjecture}, Simon Stevin, {\bf
27}(1950), 99-105.

\bibitem{Erdos50}
P. Erd\H os, \textit{On integers of the form $2^k+p$ and some
related problems}, Summa Brasil. Math., {\bf 2}(1950), 113-123.

\bibitem{Erdos77} P. Erd\H{o}s, \textit{Problems and results on combinatorial number theory.
III}, Number Theory Day, Proc. Conf., New York 1976, Lect. Notes
Math. 626, 43-72, 1977.



\bibitem{HalberstamRichert74} H. Halberstam and H.-E. Richert, \textit{Sieve methods}, London Mathematical
Society Monographs, \textbf{4}, Academic Press, London-New York,
1974.

\bibitem{Nathanson96} M. B. Nathanson,  \textit{Additive number theory. The classical bases},
Graduate Texts in Mathematics, \textbf{164}, Springer-Verlag, New
York, 1996.

\bibitem{Sun00} Z. W. Sun, \textit{On integers not of the form $\pm p^{a}\pm q^{b}$},
Proc. Am. Math. Soc., {\bf 128}(2000), 997-1002 .



\bibitem{Tao} T. Tao, \textit{A remark on primality testing and decimal
expansions}, J. Austr. Math. Soc., to appear.



\end{thebibliography}
\end{document}